\providecommand{\tabularnewline}{\\}
\numberwithin{equation}{section}
\numberwithin{figure}{section}
\theoremstyle{plain}
\newtheorem{thm}{\protect\theoremname}
  \theoremstyle{definition}
  \newtheorem{defn}[thm]{\protect\definitionname}
  \theoremstyle{plain}
  \newtheorem{lem}[thm]{\protect\lemmaname}
  \theoremstyle{plain}
  \newtheorem{conjecture}[thm]{\protect\conjecturename}
  \providecommand{\conjecturename}{Conjecture}
  \providecommand{\definitionname}{Definition}
  \providecommand{\lemmaname}{Lemma}
\providecommand{\theoremname}{Theorem}
\begin{document}

\title{Eigenvalue Dependence of Numerical Oscillations in Parabolic Partial
Differential Equations}

\author{Corban Harwood}

\author{Mitch Main}
\begin{abstract}
This paper investigates oscillation-free stability conditions of numerical
methods for linear parabolic partial differential equations with some
example extrapolations to nonlinear equations. Not clearly understood,
numerical oscillations can create infeasible results. Since oscillation-free
behavior is not ensured by stability conditions, a more precise condition
would be useful for accurate solutions. Using Von Neumann and spectral
analyses, we find and explore oscillation-free conditions for several
finite difference schemes. Further relationships between oscillatory
behavior and eigenvalues is supported with numerical evidence and
proof. Also, evidence suggests that the oscillation-free stability
condition for a consistent linearization may be sufficient to provide
oscillation-free stability of the nonlinear solution. These conditions
are verified numerically for several example problems by visually
comparing the analytical conditions to the behavior of the numerical
solution for a wide range of mesh sizes.
\end{abstract}

\keywords{Numerical Oscillations, Partial Differential Equations, Finite Differences}

\subjclass[2000]{65M06, 65M12}

\maketitle
\global\long\def\dx{\Delta x}
\global\long\def\dt{\Delta t}

\section{Introduction}

Numerical methods are useful for constructing quick estimates of solutions
to partial differential equations (PDEs). Error can, however, creep
into the solution making results inaccurate and, at times, physically
meaningless due to stability issues. Yet, ensuring the stability of
the numerical solution to a well-posed PDE ensures convergence to
the true solution of the PDE. Von Neumann \cite{Von Neumann} provided
an analytical method for determining the stability of linear numerical
schemes, which he refined from the stability analysis provided first
by Crank and Nicolson \cite{Crank and Nicolson}. Numerical oscillations
can also occur, either as a small noise in the solution or as dramatic
swings in the solution leading to an instability. Scientists have
long been concerned with these oscillations, but have most often responded
by damping all oscillations. For example, Britz et al. \cite{Britz et al}
developed a damping algorithm specifically for oscillations caused
by discontinuous initial conditions for the Crank-Nicolson scheme
based off the initial work by �sterby \cite{Osterby}. But simply
damping these oscillations can change the nature of the solution itself,
especially if the solution was naturally oscillatory to begin with.
Rather, if we understand the nature of these oscillations and can
identify why they occur, then we can avoid them altogether. 

Previous work by Harwood \cite{Harwood} suggests there exists a more
restrictive condition for oscillation-free conditions as noted by
the eigenvalues of the time-step matrix. A spectral analysis would
be appropriate to determine the appropriate conditions on the scheme,
providing a necessary and sufficient condition for oscillation-free
solutions. 

Nonlinear problems provide a different challenge, but a consistent
linearization technique could provide insight through spectral or
Von Neumann analyses. Such PDEs have a range of application.  Each
of these models are prone to oscillation given certain conditions.
This paper investigates the relationship between eigenvalues and oscillatory
behavior and proposes necessary and sufficient conditions for oscillation-free
stable numerical solutions.

\section{Theory}

\subsection{Numerical Convergence}

Convergence of a numerical solution to the exact solution of a given
PDE is paramount to implementing a numerical method, but this form
of convergence is hard to measure. Instead, a numerical method can
be proven convergent indirectly by showing it to be consistent and
stable. A numerical method is \emph{consistent} if it converges to
the given PDE as the step size all go to zero. Further, a method is
\emph{stable} if any errors are bound instead of being magnified. 
\begin{defn}
\textit{A two-level difference scheme, $U^{n+1}=MU^{n},$ is said
to be stable with respect to the norm $||.||$ if there exist positive
constants $\triangle t_{0},\ and\ \triangle x_{0},$ and non-negative
constants K and $\beta$ so that 
\[
||U^{n+1}||\leq Ke^{\beta\triangle t}||U_{0}||,
\]
for $0\leq t=(n+1)\triangle t,\ 0<\triangle x\leq\triangle x_{0}\ and\ 0<\triangle t\leq\triangle t_{0}$}.
\label{Burden Faires Stability}
\end{defn}
Since $||U^{n+1}||\leq||M^{n+1}||\cdot||U_{0}||$, $||M^{n+1}||\leq Ke^{\beta\dt}$
is sufficient for stability of the solution. To compensate for $\rho(M^{n+1})\leq||M^{n+1}||_{2}$
we force the Von Neumann criterion
\begin{equation}
\rho(M)\leq1+C\triangle t,\label{VN criterion}
\end{equation}
for some $C\geq0$, so that $\rho(M^{n+1})\leq1+C(n+1)\triangle t\leq Ke^{C(n+1)\triangle t}$,
which is a necessary condition for stability. If the matrix is symmetric,
$\rho(M^{n+1})=||M^{n+1}||\leq1+C(n+1)\triangle t\leq e^{C(n+1)\triangle t}$,
then the criterion is necessary and sufficient. Often $C=0$ is adequate
to ensure a necessary and sufficient condition for stability, but
with exponentially growing solutions, the Von Neumann criterion may
require $C>0$. Solving for the amplification factor, $e^{\gamma\dt}=\frac{U_{m}^{n+1}}{U_{m}^{n}}$.
The condition is applied such that $|e^{\gamma\triangle t}|\leq1+C\triangle t$.
However, if the eigenvalues are easily calculated, we can determine
more accurately the stability and behavior of the solution.
\begin{thm}
Lax-Richtemeyer Theorem: \label{thm:Lax-Richtemeyer-Theorem}

``Given a Partial Differential Equation and a consistent numerical
method, a numerical method is convergent iff it is stable.'' 
\end{thm}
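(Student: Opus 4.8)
The plan is to prove the two implications separately by tracking the global error $e^{n}=U^{n}-u^{n}$, where $U^{n}$ is the numerical solution and $u^{n}$ denotes the exact solution of the PDE sampled on the grid at time level $n$. By consistency, substituting the exact solution into the scheme $U^{n+1}=MU^{n}$ leaves a residual equal to the local truncation error, so that $u^{n+1}=Mu^{n}+\dt\,\tau^{n}$ with $\|\tau^{n}\|\to 0$ as $\dt,\dx\to 0$. Subtracting this identity from the scheme produces the error recursion $e^{n+1}=Me^{n}-\dt\,\tau^{n}$, which is the common starting point for both directions.

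For the direction stability $\Rightarrow$ convergence, I would unroll the error recursion with exact initial data ($e^{0}=0$) to obtain $e^{n}=-\dt\sum_{j=0}^{n-1}M^{n-1-j}\tau^{j}$. Applying the stability bound underlying Definition \ref{Burden Faires Stability}, namely $\|M^{k}\|\leq Ke^{\beta t}$ uniformly for $t=k\dt\leq T$, and taking norms yields $\|e^{n}\|\leq Ke^{\beta T}\,(n\dt)\,\max_{j}\|\tau^{j}\|$. Since $n\dt=t_{n}\leq T$ is held fixed while $\dt\to 0$, consistency forces $\max_{j}\|\tau^{j}\|\to 0$, and therefore $\|e^{n}\|\to 0$. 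This establishes convergence and is the more routine of the two directions, requiring only the telescoping sum and a uniform operator bound.

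The reverse direction, convergence $\Rightarrow$ stability, is the main obstacle, since it cannot be obtained by a direct estimate and instead rests on functional-analytic machinery. I would argue by contraposition: assuming the scheme is unstable means the family of operators $\{M^{n}:n\dt\leq T\}$ fails to be uniformly bounded as the mesh is refined. Viewing each $M^{n}$ as a bounded linear operator on the Banach space underlying the norm $\|\cdot\|$, the Uniform Boundedness Principle (Banach--Steinhaus) guarantees that a pointwise-bounded family is automatically uniformly bounded. Hence instability must produce some admissible initial datum $U_{0}$ for which $\|M^{n}U_{0}\|$ is unbounded along the refinement, so the numerical solution cannot converge to the bounded exact solution, contradicting convergence. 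The well-posedness of the PDE is precisely what supplies the bounded exact solution against which this contradiction is drawn, and care must be taken to formulate the limit over the joint refinement $(\dt,\dx)\to 0$ rather than over a single fixed operator.
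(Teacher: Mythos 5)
The first thing to note is that the paper contains no proof of this statement: the theorem is quoted (literally, in quotation marks) as a classical result --- the Lax--Richtmyer theorem, standard in references such as the paper's citation of Thomas --- and it is used in the paper only as a license to verify convergence indirectly by checking consistency and stability. So there is no in-paper argument to compare yours against, and your proposal must stand on its own. On its own terms it is a correct outline of the standard textbook proof. The forward direction (stability implies convergence) via the residual identity $u^{n+1}=Mu^{n}+\Delta t\,\tau^{n}$, the error recursion $e^{n+1}=Me^{n}-\Delta t\,\tau^{n}$, the telescoped sum, and the uniform bound $\|M^{k}\|\leq Ke^{\beta t}$ from Definition \ref{Burden Faires Stability} is exactly right, and your estimate $\|e^{n}\|\leq Ke^{\beta T}\,(n\Delta t)\,\max_{j}\|\tau^{j}\|$ is the correct conclusion. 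The converse by contraposition through the Uniform Boundedness Principle is likewise the classical route.

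Two caveats keep the sketch from being a complete proof, both concentrated in the converse direction. Banach--Steinhaus applies to a family of bounded operators on a single Banach space, but the matrices $M$ of a finite difference scheme act on mesh-dependent finite-dimensional spaces; worse, for any \emph{fixed} mesh the family $\{M^{n}:n\Delta t\leq T\}$ is a finite set and hence trivially uniformly bounded, so instability in the Lax sense is visible only across the whole family of operators indexed by $(\Delta t,\Delta x,n)$ as the mesh is refined. The classical argument therefore first embeds all discrete solution operators into one common Banach space (via restriction and prolongation maps, or by realizing the scheme as operators on the PDE's solution space) and only then invokes uniform boundedness over that joint family. You flag this in a single sentence ("care must be taken to formulate the limit over the joint refinement"), but that setup is precisely where the real work lies; without it the unbounded datum $U_{0}$ you extract lives on no fixed space and the contradiction with convergence cannot be drawn. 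Second, your consistency hypothesis $\|\tau^{n}\|\to 0$ presupposes sufficient smoothness of the exact solution; this is where well-posedness enters the forward direction as well, not only the converse, and it should be stated as a hypothesis rather than absorbed silently.
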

And more specifically, for two-level numerical method, this can be
restated as the Lax Equivalence theorem : 
\begin{thm}
Lax Equivalence Theorem: 

``A consistent, two-level difference scheme, $u^{n+1}=MU^{n}+\triangle t\cdot G^{n}$,
for a well-posed linear (constant-coefficient) initial-value problem
is convergent if and only if it is stable.'' \label{thm:Lax-Equivalence-Theorem:}
\end{thm}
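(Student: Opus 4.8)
The plan is to prove the two implications separately, since the theorem is a biconditional. The forward direction (stability $\Rightarrow$ convergence) is an elementary error-propagation argument, while the converse (convergence $\Rightarrow$ stability) rests on the Banach--Steinhaus uniform boundedness principle and is where the real difficulty lies. Throughout, fix a time horizon $T$ and let $\|\cdot\|$ be the norm from the stability definition.

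For stability $\Rightarrow$ convergence, I would introduce the global error $e^n = U^n - u(t_n)$, where $u(t_n)$ is the exact solution sampled on the grid. Consistency guarantees that the exact solution satisfies the scheme up to a local truncation error $\tau^n$ with $\|\tau^n\|\to 0$ as $\dt,\dx\to 0$, that is, $u(t_{n+1}) = M\,u(t_n) + \dt\,G^n + \dt\,\tau^n$. Subtracting this from $U^{n+1}=M U^n + \dt\,G^n$ yields the error recursion $e^{n+1} = M e^n - \dt\,\tau^n$, which unrolls to
\[
e^{n+1} = M^{n+1} e^0 - \dt \sum_{j=0}^{n} M^{n-j}\tau^j.
\]
Applying the stability bound $\|M^{n-j}\|\le K e^{\beta\dt(n-j)}\le K e^{\beta T}$ for $t_n\le T$, together with $(n+1)\dt\le T$, gives
\[
\|e^{n+1}\| \le K e^{\beta T}\|e^0\| + K e^{\beta T}\,T \max_{0\le j\le n}\|\tau^j\|.
\]
Letting the step sizes tend to zero drives $\|e^0\|\to 0$ (from the initial-data approximation) and $\max_j\|\tau^j\|\to 0$ (from consistency), so $\|e^{n+1}\|\to 0$ and the scheme converges.

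For the converse, I would argue by contraposition using Banach--Steinhaus. Fix a Banach space on which the discrete solution operators $M^n$ act, indexed by the discretization parameters subject to $n\dt\le T$. Convergence means that for each fixed initial datum the numerical solutions $M^n U^0$ approach the exact solution, which is bounded because the problem is well-posed; hence the family $\{M^n : n\dt\le T\}$ is pointwise bounded. The uniform boundedness principle then upgrades this to $\sup\{\|M^n\| : n\dt\le T\} < \infty$, which is precisely the stability estimate $\|M^n\|\le K e^{\beta T}$.

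The main obstacle is this second direction. The error-propagation argument is routine, but the converse requires correctly framing the discretizations in a single Banach space and verifying that the pointwise-boundedness hypothesis of Banach--Steinhaus holds uniformly in $\dt$ and $\dx$. It is \emph{well-posedness} that supplies the pointwise bound, so this hypothesis cannot be dropped; identifying exactly where it enters is the delicate point of the proof.
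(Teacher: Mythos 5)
The paper never proves this statement: the Lax Equivalence Theorem appears there in quotation marks as a classical result, cited from the literature (Thomas; Richtmyer--Morton), and is invoked only as a license to verify convergence indirectly by checking consistency and stability. So there is no in-paper proof to compare yours against, and your proposal has to stand on its own.

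Judged on its own merits, your outline is the standard Lax--Richtmyer argument and is essentially sound. The forward direction via the error recursion $e^{n+1} = M e^{n} - \Delta t\,\tau^{n}$, unrolled and bounded by the stability constant $K e^{\beta T}$, is the classical error-propagation proof, and it is the only direction the paper actually uses. For the converse, Banach--Steinhaus is indeed the right tool, and you correctly identify where well-posedness enters: boundedness of the exact solution converts convergence into pointwise boundedness of the discrete evolution operators. Two details would still need care in a complete write-up. First, uniform boundedness must be applied to the whole two-parameter family $\{M(\Delta t)^{n} : n\Delta t \le T,\ 0 < \Delta t \le \Delta t_{0}\}$, and pointwise boundedness of that family does not follow from convergence along a single sequence $\Delta t_{k} \to 0$; one needs convergence in the Lax--Richtmyer sense (for every choice of $\Delta t_{k}\to 0$ and $n_{k}\Delta t_{k}\to t$), together with a subsequence argument over $[0,T]$, to rule out blow-up of $\|M(\Delta t)^{n}u\|$ at fixed $u$. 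Second, genuine finite-difference operators act on different grid spaces as $\Delta x$ changes, so ``fixing a Banach space on which the $M^{n}$ act'' is itself nontrivial: one must either introduce restriction and prolongation maps or regard $M(\Delta t)$ as an operator on the Banach space of the continuous problem, as in the standard treatment. Neither issue is fatal, and you have flagged the second one explicitly; your sketch is a faithful compression of the textbook proof rather than a new route.
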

Using consistent schemes, we can explore the stability bounds to indirectly
verify convergence. Burden and Faires \cite{Burden and Faires} define
stability for a numerical system as:

\subsection{Spectral Analysis}

For linear PDEs, the eigenvalues of the transformation matrix indicate
the stability of the solution and evidence suggests they also determine
oscillatory behavior. Since $\rho(M)\leq||M||_{2}$, an analysis of
the eigenvalues relates to the boundedness of the solution through
the matrix \cite{Horn and Johnson}. Assuming that the solution to
difference scheme is separable, we can show that the discrete error
growth factors are the eigenvalues of M. Also, assuming the solution
is separable, as is common for linear PDEs, the eigenvalues can be
defined by: 
\begin{equation}
e^{\gamma\dt}\equiv\frac{\epsilon_{m}^{n+1}}{\epsilon_{m}^{n}}=\frac{U_{m}^{n+1}}{U_{m}^{n}}=\frac{T^{n+1}}{T^{n}}=\lambda,\ \mbox{where }U_{m}^{n}=X_{m}T^{n}.\label{Growth Factor-Eigenvalue relation}
\end{equation}
 This shows that when separation of variables can be assumed in the
PDE, the eigenvalues of the matrix for the numerical scheme equals
the error growth factor. This relationship is supported by the similarity
of Von Neumann stability analysis to the Matrix Convergence Theorem
proven in \cite{Friedberg et Al}:
\begin{thm}
\textbf{\emph{(Matrix Convergence Theorem)}}$\lim_{n\rightarrow\infty}M^{n}$
exists iff $|\lambda_{i}|<1$ or $\lambda_{i}=1,\ \forall i$ and
the multiplicity of $\lambda=1$ is equal to the dimension of the
eigenspace, $E_{1}$. \label{Matrix Convergence Theorem}
\end{thm}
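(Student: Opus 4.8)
The plan is to reduce the problem to a single Jordan block via the Jordan canonical form. Write $M=PJP^{-1}$ with $J$ in Jordan form and $P$ invertible, so that $M^{n}=PJ^{n}P^{-1}$. Conjugation by a fixed invertible $P$ is continuous with continuous inverse, and in finite dimensions convergence of a matrix sequence is just entrywise convergence (all norms being equivalent); hence $\lim_{n\to\infty}M^{n}$ exists if and only if $\lim_{n\to\infty}J^{n}$ exists. Since $J$ is block diagonal, $J^{n}$ is block diagonal with the $n$-th powers of the individual Jordan blocks along the diagonal, so the question reduces to deciding when the powers of a single Jordan block converge.

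Next I would analyze one block $J_{k}=\lambda I+N$ of size $s$, where $N$ is the nilpotent shift with $N^{s}=0$. Because $\lambda I$ and $N$ commute, the binomial theorem gives
\[
J_{k}^{n}=\sum_{j=0}^{s-1}\binom{n}{j}\lambda^{n-j}N^{j},
\]
so the entries of $J_{k}^{n}$ are the scalars $\binom{n}{j}\lambda^{n-j}$ for $0\le j\le s-1$. Examining these term by term: if $|\lambda|<1$ then each $\binom{n}{j}\lambda^{n-j}\to0$, since polynomial growth is dominated by geometric decay, so $J_{k}^{n}\to0$; if $|\lambda|>1$ the diagonal entries $\lambda^{n}$ blow up; and if $|\lambda|=1$ with $\lambda\neq1$ the diagonal entries $\lambda^{n}$ fail to converge. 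This isolates $|\lambda|<1$ and $\lambda=1$ as the only admissible eigenvalues, matching the first part of the hypothesis.

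The block size enters only in the remaining case $\lambda=1$, where $J_{k}^{n}=\sum_{j}\binom{n}{j}N^{j}$. Its super-diagonal entry is $\binom{n}{1}=n$, which diverges as soon as $s\ge2$; only for $s=1$ (so $J_{k}^{n}=1$ for all $n$) does the block converge. Hence convergence forces every Jordan block at $\lambda=1$ to have size one, which is precisely the statement that the algebraic multiplicity of $\lambda=1$ equals its geometric multiplicity $\dim E_{1}$. Assembling the cases yields the claim: $\lim M^{n}$ exists iff every eigenvalue satisfies $|\lambda|<1$ or $\lambda=1$ and the blocks at $\lambda=1$ are all trivial.

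I expect the main obstacle to lie not in the per-entry limits, which are routine, but in making the two reductions precise and in translating the Jordan-block condition back into the stated eigenspace language---arguing that ``the multiplicity of $\lambda=1$ equals $\dim E_{1}$'' is equivalent to ``every Jordan block for $\lambda=1$ has size one,'' and that entrywise convergence of $J^{n}$ is genuinely equivalent to convergence of $M^{n}$ under conjugation. With those equivalences in hand, the binomial estimate above closes the argument.
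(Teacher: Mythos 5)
Your proposal is correct, but there is no in-paper argument to compare it against: the paper states this theorem as a quoted result from Friedberg, Insel, and Spence \cite{Friedberg et Al} and supplies no proof of its own. Your route---pass to the Jordan form $M=PJP^{-1}$, note $M^{n}=PJ^{n}P^{-1}$ so that convergence of $M^{n}$ and of $J^{n}$ are equivalent, then analyze a single block $J_{k}=\lambda I+N$ via $J_{k}^{n}=\sum_{j=0}^{s-1}\binom{n}{j}\lambda^{n-j}N^{j}$---is the standard textbook proof of this result, and your case analysis is sound. The two points you flag at the end are indeed the only ones needing an explicit line: for $|\lambda|=1$ with $\lambda\neq1$, non-convergence of $\lambda^{n}$ follows because a limit $L$ would have to satisfy $\lambda L=L$, forcing $L=0$, which contradicts $|\lambda^{n}|=1$; and the translation between the eigenspace language and block sizes is exactly that the algebraic multiplicity of $\lambda=1$ is the sum of the sizes of its Jordan blocks while $\dim E_{1}$ is the number of those blocks, so the two agree iff every block at $\lambda=1$ has size one. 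It is also worth stating explicitly that your block analysis delivers both directions of the ``iff'': sufficiency because blocks with $|\lambda|<1$ tend to zero and trivial blocks at $\lambda=1$ are constant, necessity because any block violating the hypotheses has a divergent or non-convergent entry. With those lines added, the proof is complete.
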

Furthermore, $lim_{n\rightarrow\infty}M^{n}$ is bounded iff $|\lambda|\leq1$
when $mult(1)=dim(E_{1})$ \cite{Friedberg et Al}. Because of this
connection, a spectral analysis of the transformation matrix can reveal
patterns in the orientation, spread, and balance of the growth of
errors for various wave modes and unbalanced eigenvalues can lead
to oscillations in the results. 

These troublesome oscillations have several sources. Unstable oscillations
stem from unstable methods, where the algorithm is either inconsistent
or the discretization is too large. Other sources are discontinuities:
in the initial condition, or between the initial condition and boundary
condition. The Crank-Nicolson method can be susceptible to these discontinuities,
which propagate oscillations (stable and unstable) in a normally consistent
and stable method. The nature of the PDE itself can create oscillations,
but these are generally more acceptable since these are expected and
are part of the solution. However, we are concerned with stable oscillations.
Theoretically, oscillations are hard to define. From a physical study
of springs, one can define oscillations as a type of periodic movement
or displacement from an equilibrium position \cite{Peterson and Sochaski}.
While this is visually descriptive, it is lacking analytically. Mathematically
defining oscillations is difficult, but numerically describing them
can be easier. Since oscillations displace positively and negatively
from the equilibrium, we implemented a working numerical definition
for oscillation. By tracking the sign of the displacement, we could
track if the signs of the spatial components flipped in time. Pearson
\cite{Pearson} and Britz et al. \cite{Britz et al} stated that preserved
monotonicity in time could suggest oscillation-free behavior for a
solution. While not an exact definition of oscillation, monotonicity
provides a sufficient condition for oscillation-free stability. Thus,
we used a theoretical definition for temporal monotonicity: $U_{m}^{n+1}\leq U_{m}^{n}$
or $U_{m}^{n+1}\geq U_{m}^{n}$ \label{Monotonicity Definition}.
Spatial monotonicity is defined in a similar manner where the lower
index changes instead. 

With this information, if a region of stable oscillations were to
exist for a particular scheme, then it would have to be bounded above
by the stability condition (obviously, since we wish to account for
oscillations inside the stability bounds) and bounded below by the
monotonicity condition.

Tridiagonal matrices create a very interesting special case. Each
equation we investigated, under Dirichlet boundaries, created a tridiagonal
matrix of a form whose eigenvalues were calculated by \cite{Harwood}.
Furthermore we could reasonably restrict the eigenvalues further to
an oscillation free condition, by forcing the real parts of the eigenvalues
positive \cite{Harwood}. For notation, we will refer to the upper
off diagonal entries as $a$, the diagonal as $b$ and the lower off
diagonal as $c$. For such tridiagonal matrices, the eigenvectors
with first component $u_{i}^{(1)}=\sin\left(\frac{i\pi}{N+1}\right),\ \forall i$,
are defined by their $k^{th}$ component as:
\[
u_{i}^{(k)}=\sin\left(\frac{ik\pi}{N+1}\right),\ \forall i
\]

This method is effective in a spectral analysis of Dirichlet boundary
conditions. The explicit Euler Heat equation is a prime example for
tridiagonal matrices. In the explicit Heat Equation $a=c$, thus the
eigenvalues and eigenvectors are independent of a square root and
the equation is quickly simplified. Also note that in the Heat FTCS
scheme, $b=1-2r$, where $r=\frac{\triangle t}{\triangle x^{2}}$.
This means that the upper bound on the eigenvalues is 1, which is
the upper bound to stability. We can calculate these eigenvalues with:

\[
\lambda_{i}=1-4r\sin^{2}\left(\frac{i\pi}{2(N+1)}\right)
\]
where $N$ is the size of the matrix.

By calculating the max and min we can definitively say that $\lambda_{i}\in(1-4r,1)$
but as $r$ increases in size, the eigenvalues leak into the negative
real parts causing oscillations. Thus we can use eigenvalues to more
restrictively bound oscillation-free stability. Other boundaries can
change the nature of the matrix and make calculating the eigenvalues
difficult. Neumann problems rid us of our tridiagonal assumption,
making eigenvalues more difficult to analyze. In these cases, since
we can assume a linear separable solution, the Von Neumann error factor
can be used to determine the behavior of the eigenvalues. In analyzing
the Heat Crank-Nicolson scheme matrix, a spectral analysis of the
eigenvalues showed that Dirichlet problems for the Heat equation lie
in the set $\lambda_{i}\in\left[\frac{1-2r}{1+2r},1\right)$. However
the Neumann problem showed that the largest eigenvalue was always
1. This meant that Dirichlet problems could experience oscillations
if the eigenvalues were to become negative, but all Neumann problems
were going to be stable, regardless of the step sizes used for discretization.

\subsection{Linearized Analysis of Nonlinear Problems}

For nonlinear problems, we wished to investigate three main methods
of calculating a numerical solution. An explicit scheme which doesn't
require any linearization but is very susceptible to numerical oscillations
as well as instability due to the nonlinearities. A semi-implicit
scheme allows us to explicitly solve for the linear portions of the
equation, but tackle the nonlinearities implicitly. This creates problems
computationally since the matrix requires updates in order to implicitly
solve the nonlinearities, making it very costly to constantly update
and resume solving. Instead, an implicit scheme which employs a linearization
makes computation easier. For one technique, we chose a linear approximation
of the nonlinear term: 
\begin{equation}
f(U_{m}^{n+1})\approx f(U_{m}^{n})+\triangle U\cdot\frac{\partial f(U^{n})}{\partial u}+....\approx f(U^{n})+(U^{n+1}-U^{n})\cdot f_{u}(U^{n})+...\label{Linear Approximation}
\end{equation}
This technique aims to approximate the nonlinear pieces independently,
similar to the manner used for the finite differences schemes (both
are based off of Taylor Series). Because this requires no updates
and runs alongside the normal computation, we can solve this linearization
scheme relatively easily. Our second technique is a nonlinear freezing
technique, where we separate off a linear term from the nonlinear
and treat the rest of the nonlinear term as a coefficient. For example,
the Fisher-KPP equation:
\[
u_{t}=u_{xx}+u(1-u)
\]
we use an implicit scheme to get:
\[
U_{m}^{n+1}-U_{m}^{n}=r(U_{m+1}^{n+1}-2U_{m}^{n+1}+U_{m}^{n+1})+\dt\cdot U_{m}^{n+1}(1-U_{m}^{n+1})
\]
Then by freezing the nonlinearity we get:
\[
U_{m}^{n+1}-U_{m}^{n}=r(U_{m+1}^{n+1}-2U_{m}^{n+1}+U_{m}^{n+1})+\dt\cdot U_{m}^{n+1}(1-\tilde{U})
\]
\\
Now that the nonlinearity is frozen, it is treated as some constant
coefficient, and we estimate it by using a worst-case bounding for
the solution. Determining the use of the maximum or minimum of the
matrix $U$ depends on which makes the magnitude of the error factor
greatest in the spectral analysis, Nonlinear freezing may be easier
computationally, but by only bounding the solution, there is chance
that the solution could contain stable oscillations.\\
Performing stability analysis requires the same techniques as used
on a linear PDE plus an analysis of the error created to linearize
the PDE. In this way we define stability conditions, oscillation free
conditions and balanced eigenvalue conditions which we can compare
to those used on linear problems.

Theoretical analysis provides us with a base which we can use to launch
our numerical analysis from. Next, we will explain how we used what
we knew theoretically, and how we applied that knowledge to create
simulations with strong and sound tests, to accurately falsify or
confirm our conjectures.

\section{Methodology}

During simulation, we wished to check three main components: Is the
solution stable? Do positive eigenvalues dominate the matrix? And
do oscillations appear in the solution? Creating numerical systems
to check these conditions challenged us to use definitions for each
of these components and implement them. Using the conditions provided
by the theoretical analysis, we checked to see if the numerical tests
measured up to each of these conditions. In our first steps, we numerically
verified that the methods were consistent and usable with Linear PDEs.
Using explicit, implicit, and semi-implicit numerical methods, we
ran simulations on each numerical scheme for each equation By beginning
with Linear PDEs we could develop sound methods which we could apply
to more complicated nonlinear equations, after a linearization was
applied.

\subsection{Stability}

Using Burden and Faire's definition of stability (\ref{Burden Faires Stability}),
a Von Neumann analysis appropriately prescribed a condition for each
scheme. Numerically tracking stability required us to cap the growth
of the solution. Burden and Faires used the standard $l_{2}$ norm,
but considering the computational strain it required, we sought for
a better fit. We settled on the infinity norm (also known as a supremum
norm) which is defined by the magnitude of the largest component of
the matrix\label{Infinity Norm Definition}. We will prove that $||U^{n+1}||_{\infty}\leq||M^{n+1}||_{2}||U_{0}||_{2}$,
which means that the infinity norm is a tighter condition than an
$l_{2}$ norm, making it a necessary condition for stability.
\begin{lem}
\textup{$||U^{n+1}||_{\infty}\leq||M^{n+1}||_{2}||U_{0}||_{2}$}\end{lem}
\begin{proof}
We will prove this lemma by simplifying the right hand side. For a
two-level difference scheme (Def. 1),
\[
||M^{n+1}||_{2}||U_{0}||_{2}\geq||M^{n+1}U_{0}||_{2}=||U^{n+1}||_{2}
\]
By definition of the 2-norm,
\[
||U^{n+1}||_{2}^{2}=\sum_{m=1}^{m}(U_{m}^{n+1})^{2}\geq(U_{k}^{n+1})^{2}\ \forall k
\]
 which implies that 
\[
\sum_{m=1}^{m}(U_{m}^{n+1})^{2}\geq\max_{k}(U_{k}^{n+1})^{2},
\]
which in turn implies that 
\[
||U^{n+1}||_{2}=\sqrt{\sum_{m=1}^{m}(U_{m}^{n+1})^{2}}\geq\sqrt{\max_{k}(U_{m}^{n+1})^{2}}=\max_{k}|U_{k}^{n+1}|=||U^{n+1}||_{\infty}.
\]
Thus, 
\[
||U^{n+1}||_{\infty}\leq||M^{n+1}||_{2}||U_{0}||_{2},
\]
so the infinity norm provides a tighter bound (\ref{Infinity Norm Definition})
\end{proof}
Realizing that when M is powered up over each temporal step, we prescribed
a numerical threshold on the growth of the matrix, U. At first, it
seemed appropriate to track the norm of the solution vector U, but
M's growth implies the growth of U, i.e. 
\[
||U^{n+1}||_{\infty}\geq K||U_{0}||_{2}\mbox{ for }K\gg1\implies||M^{n+1}||\geq K\gg1
\]
Tracking the matrix multiplier this way, allows us to cap the solution
before it gets too big because we know that it is unstable.

\subsection{Numerical Oscillations}

Oscillations are very hard to track numerically. It requires knowing
the behavior of a solution and then finding a manner to track things
different than the prescribed behavior. The theoretical monotonicity
condition ($\ref{Monotonicity Definition}$) helped us develop a numerical
monotonicity check. This definition allowed us to detect oscillations
in several initial boundary value problems. A dispersion and dissipation
analysis, allowed us to determine the PDE's natural monotonic behavior
(if it behaved monotonically), and thus we could prescribe a monotonicity
test which would most appropriately detect oscillations. From our
theoretical definition we developed the following numerical test:
\begin{equation}
sign(U_{m}^{n+2}-U_{m}^{n+1})*sign(U_{m}^{n+1}-U_{m}^{n})\geq0\label{Numerical Oscillation Test}
\end{equation}
By comparing the signs of differences between two previous temporal
or spatial steps (depending on the behavior of the solution), we could
determine if any oscillations occur by a ``flip-flopping'' of signs
across an equilibrium position.\\
Our numerical test face some limitations with other PDEs. The Fisher-KPP
failed the monotonicity check since, with most initial conditions,
there is a diffusive stage before the wave can propagate. We are unable
to differentiate physical waves from numerical oscillations by analyzing
the solution itself, but have found that eigenvalues of the method
can indicate such behavior well.

\subsection{Eigenvalues}

Preliminary investigation through simulations showed a correlation
between positive dominant eigenvalues (a single large positive eigenvalue
dominating the behavior of the matrix) and oscillation-free solutions.
Since the eigenvalues of the matrix directly relate to the growth
factor (\ref{Growth Factor-Eigenvalue relation}), we can conclude
that if all of the eigenvalues are positive and have magnitude less
than or equal to 1, we know that the solution will be stable and oscillation-free.
This provides a lower bound on the oscillation-free condition, which
we denote by calculating when $Re(\lambda_{i})>0.$ Qualitatively,
we noticed as well that there was a positive and negative balance
between eigenvectors for simulations of our Heat FTCS scheme. If a
negative eigenvalue had a corresponding positive eigenvalue with approximately
equal magnitude, their eigenvectors also correspond in magnitude.
This lead us to believe that there existed an oscillation free condition
when the eigenvalues must be balanced, or positively dominant, which
we track numerically by calculating $\max(Re(\lambda_{i}))-\max(|Re(\lambda_{i})|)\geq0$.
We can calculate if there exists an eigenvalue greater in magnitude
(but negative) than the maximum.

\subsection{Equations for Analysis}

We focused on bounded PDEs which presented Initial Boundary Value
Problems (IBVPs), using Neumann and Dirichlet boundary conditions.
We began running simulations with linear PDEs to create a foundation
for nonlinear PDEs. Two second order spatial PDEs were chosen--Heat
Equation and the Linear Reaction-Diffusion Equation. After initial
results and analysis were mostly completed on linear PDEs we selected
a couple nonlinear equations for analysis: Nonlinear Reaction-Diffusion
and Fisher-KPP Equation. Each of these has a variety of nonlinear
terms which would allow us to find patterns in the effectiveness of
the linearizations.\\
We chose very simply three basic schemes for numerical analysis of
the linear PDEs: an explicit scheme with Forward Euler, an implicit
scheme using Backward Euler, and the time averaging Crank-Nicolson.
These schemes are all very simple and relatively inexpensive to calculate.
The Crank-Nicolson provides the advantage that, through the time averaging,
\\
To tackle the nonlinear equations, we used similar schemes, but replaced
Crank-Nicolson with a Semi-Implicit scheme. To effectively calculate
the solution, we employed two forms of linearization. First, we used
a Nonlinear Freezing technique. This breaks up the nonlinear terms
into a linear term with the nonlinear factor treated as a constant
coefficient, or frozen in time. This requires matrix updates for the
estimations and can be very expensive computationally. For the second,
we used a linear approximation method (\ref{Linear Approximation})which
we can use to create an estimation for the nonlinear term. This is
a computationally simple task, that tends to be very accurate, and
promised to be the best estimator for each of the non-linear terms.

\subsection{Program Algorithm}

We used MATLAB for our analysis. Our primary program ran simulations
for a range of $\triangle x$ and $\triangle t$ values such that
$\dt,\dx\in[0.01,1]$. Nested for-loops stepped through each successive
$\triangle x$ value and $\triangle t$ value. For each simulation,
our program numerically tested for stability, positively dominated
eigenvalues and oscillations (each test is described in their sections),
storing each result in a binary matrix. Each equation with its respective
scheme had a separate MATLAB file, which contained its matrix (or
linearized matrix for the non-linear problems) and their theoretical
conditions. After collecting this information and running the simulations,
the primary analyzer compiles a visual output, color coded by which
conditions were met--from red, meaning unstable, to dark blue, for
oscillation-free (see Figure 4.1). The visual also graphed the theoretical
and analytical condition curves, which allowed us to compare the numerical
data with analysis data. Using another loop, we had the analyzer run
through every single scheme file we produced for each of the equation's
numerical schemes.

\section{Results}

While our main analyzer function provides the majority of the experimental
data, we used other simulation programs to visualize the solution
located in a specific region on the main analyzer's visual output.
If we required further investigation, we simply exported the $\triangle x$
and $\triangle t$ values and ran the simulation for the scheme. The
numerical evidence verified our conjectures. Below is a summary table
for each of the equations and schemes we investigated and their results.

\subsection*{Heat Equation ($U_{t}=U_{xx}$)}

The Heat Equation was our most basic equation that we focused on,
and most of the results from the other equations vary predictably
from the initial results presented here.We analyzed the equation and
ran simulations using both Neumann and Dirichlet boundary conditions
under the three main schemes. The explicit scheme for the Heat Equation
produces the most quintessential picture. Each region beneath the
stability curves is filled with a color corresponding to each of the
expected conditions with a solid blue beneath the monotonicity condition
and a small amount of unstable oscillations (marked by yellow) above
it, turning into complete instability (marked by red) above. Using
these simple linear PDEs we were able to test some of our numerical
checks to see if they aligned with our analysis and their conditions.\\
The Heat Equation's implicit scheme produced little of interest. Our
numerical analysis predicted unconditional stability, and picture
was consistent to our prediction, by a completely dark blue window.
However, the Crank-Nicolson method produced a very intriguing picture.

The linear front (as seen in Figure 4.2), where stable oscillations
began to form puzzled us. Considering that all of the analytical conditions
were quadratic in nature due to the numerical schemes, we struggled
to find an explanation for the linear front presented by the numerical
analysis, since for every other file our tests performed suitably.
Additionally, none of the analysis predicted the existence of stable
oscillations forming in this particular scheme. Further efforts to
devise a possible explanation should be made in the future.

\subsection*{Linear Reaction-Diffusion Equation ($U_{t}=U_{xx}-U$)}

The Linear Reaction Diffusion differs only slightly from the Heat
Equation. A reaction term makes the solution collapse to the end state
much quicker, but not much else is changed otherwise. The analyzer
program provided the exact same results as the Heat Equation. Reaction-Diffusion
FTCS produced a beautiful spread of color each underneath their expected
conditions, with the BTCS complete unstable and curious regions of
oscillations occurring behind a linear front on the Crank-Nicolson
scheme.

\subsection*{Nonlinear Reaction-Diffusion Equation ($U_{t}=U_{xx}-U^{2}$)}

Under a stable and consistent linearization technique, the Nonlinear
Reaction-Diffusion Equation behaves exactly like its linear cousin.
Under a purely explicit scheme, the equation has nearly identical
stability behavior to the linear version. Our analysis showed that
with just a numerical estimation of the Non-Linear term in an explicit
scheme, reasonable step-sizes can produce accurate and oscillation-free
results. Using a Semi-Implicit scheme produced curious results. While
stability was assured, oscillations still crept into the solution
making this method unreliable.

\subsection*{Nonlinear Fisher-KPP Equation ($U_{t}=U_{xx}+(1-U)\cdot U$)}

The Fisher Equation provides the greatest variance from the initial
linear model of the Heat Equation. The Fisher Equation provided difficulties
in tracking monotonicity and oscillations, considering the nature
of the solution containing physical waves in 2D as well as diffusive
then expansive behavior in single dimensional space. But with manual
investigation, we found that there were stable oscillations that occurred
between the balanced eigenvalue condition and the loose Von Neumann
condition. \\
\\
Below is a summary table of each of the equations with the results
from each scheme included.

\begin{table}[H]
\begin{centering}
\textsc{\Large{}Possible Stability and Oscillat}\textsc{\large{}ory
Behaviors}\\
\textsc{\large{}for given equation and scheme}\\
\textsc{\large{}\smallskip{}
}
\par\end{centering}{\large \par}

\begin{centering}
Key: U - Unstable, SO - Stable Oscillations, OFS - Oscillation Free
Stable
\par\end{centering}

\centering{}%
\begin{tabular}{|c|c|c|c|}
\hline 
Linear Equation & FTCS & CN & BTCS\tabularnewline
\hline 
\hline 
Heat/Diffusion & OFS, U & OFS, SO & OFS\tabularnewline
\hline 
Linear Reaction Diffusion & OFS, SO, U & OFS & OFS\tabularnewline
\hline 
\end{tabular}\\
\medskip{}
\begin{tabular}{|c|c|c|c|c|}
\hline 
Nonlinear Equation & FTCS & Semi-Implicit & BTCS w/ Freeze & BTCS w/ LinApprox\tabularnewline
\hline 
\hline 
Nonlinear Reaction Diffusion & OFS, SO, U & OFS & OFS & OFS\tabularnewline
\hline 
Fisher-KPP & OFS, U & OFS & OFS & OFS\tabularnewline
\hline 
\end{tabular}
\end{table}

The figures for our files are compared by scheme and represent, the
Heat Equation, Nonlinear Reaction Diffusion Equation, and the Fisher-KPP
Equation, respectively.

\begin{figure}[H]
\caption{\protect\includegraphics[scale=0.5]{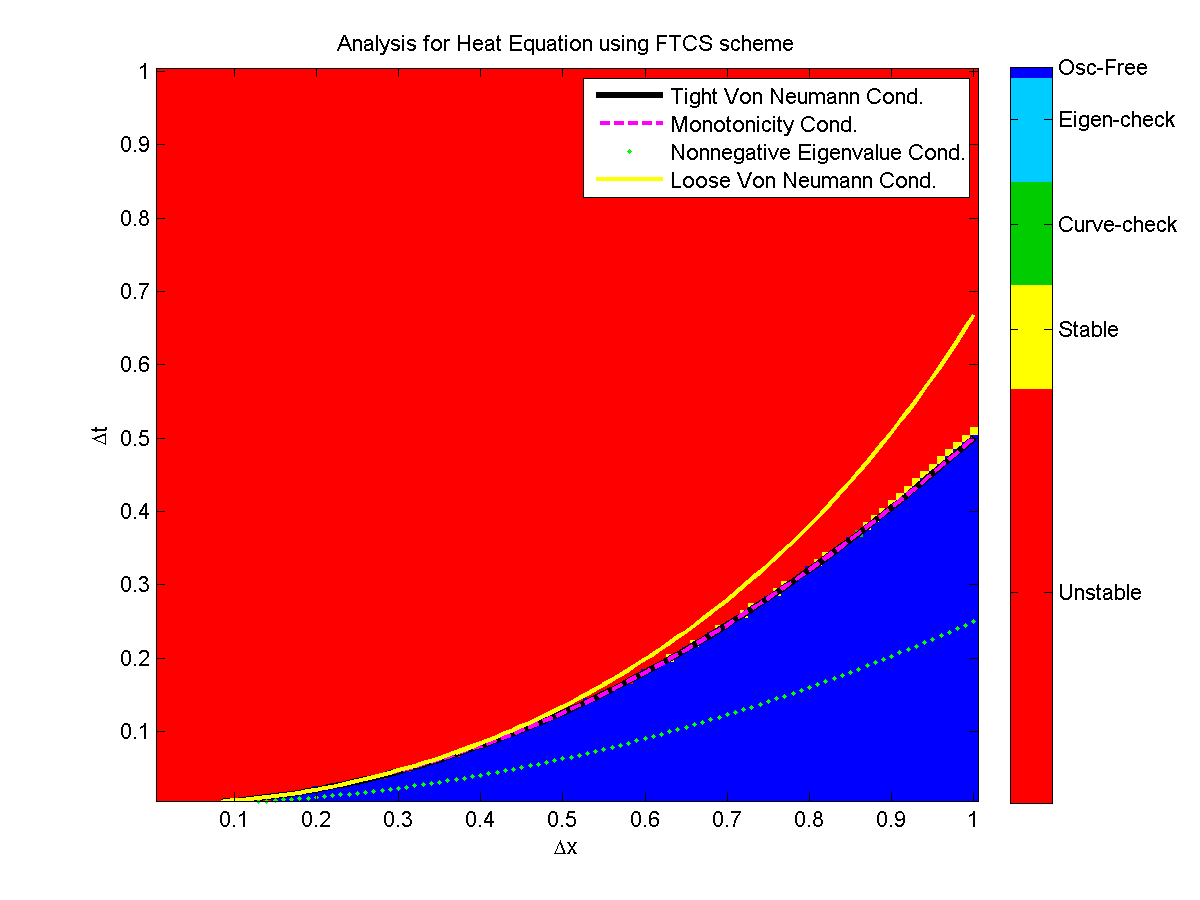}}
\caption{\protect\includegraphics[scale=0.5]{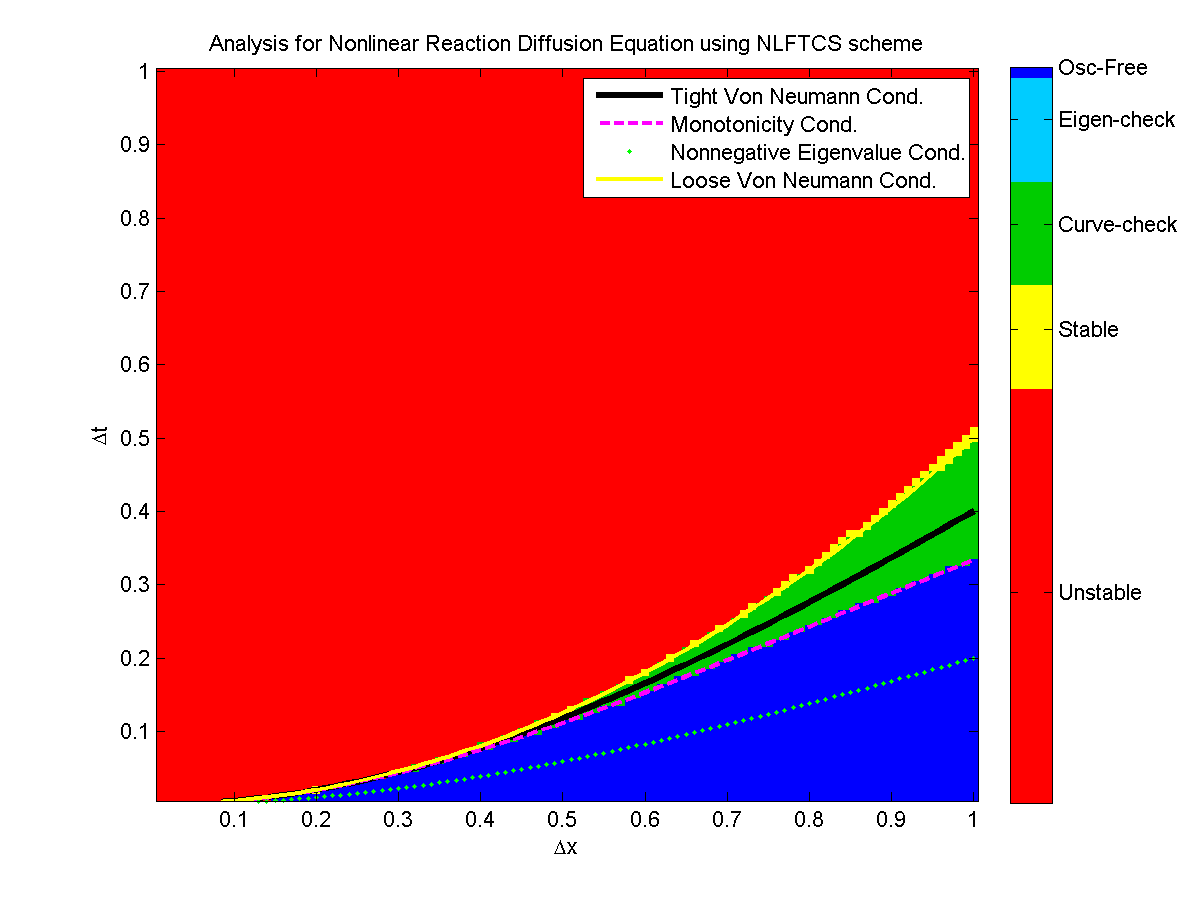}}

\caption{\protect\includegraphics[scale=0.5]{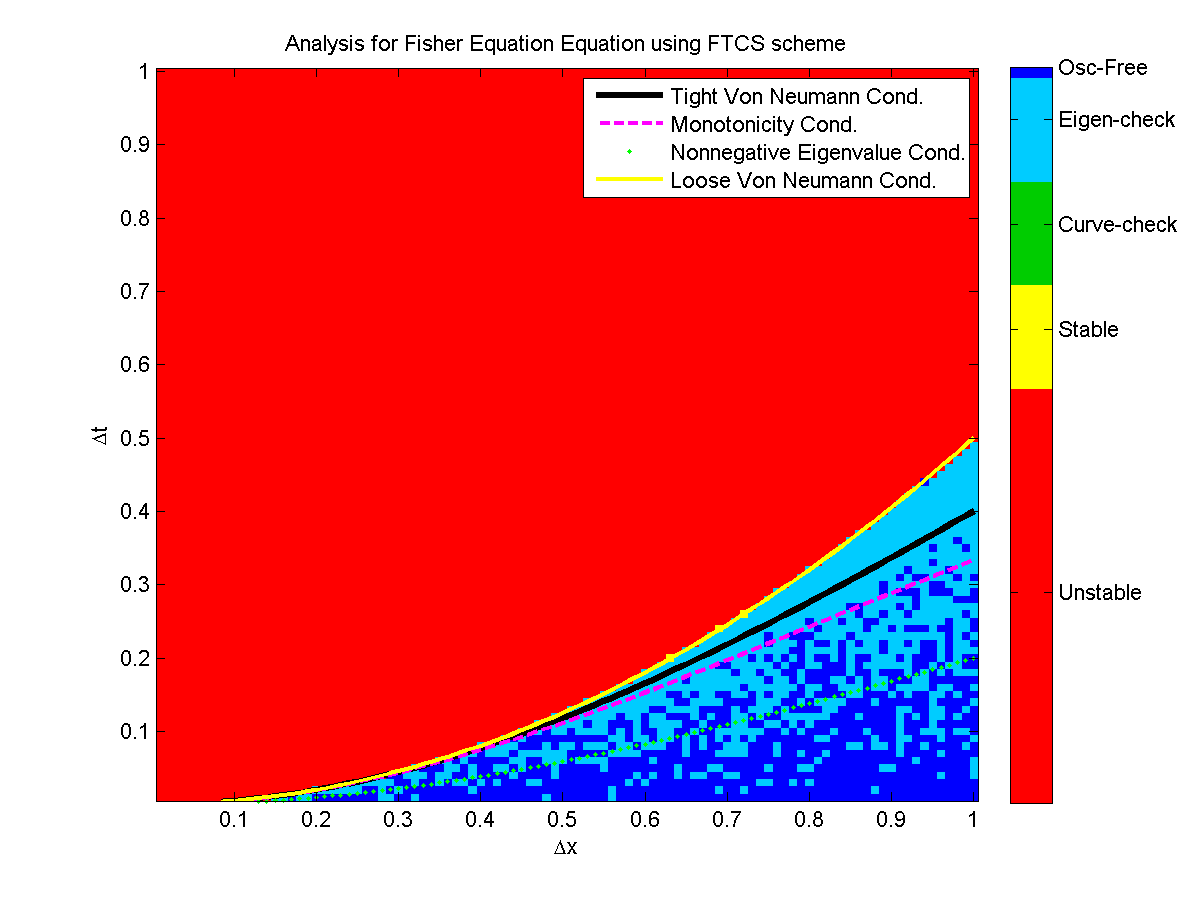}}
\end{figure}

\begin{figure}[H]
\caption{\protect\includegraphics[scale=0.5]{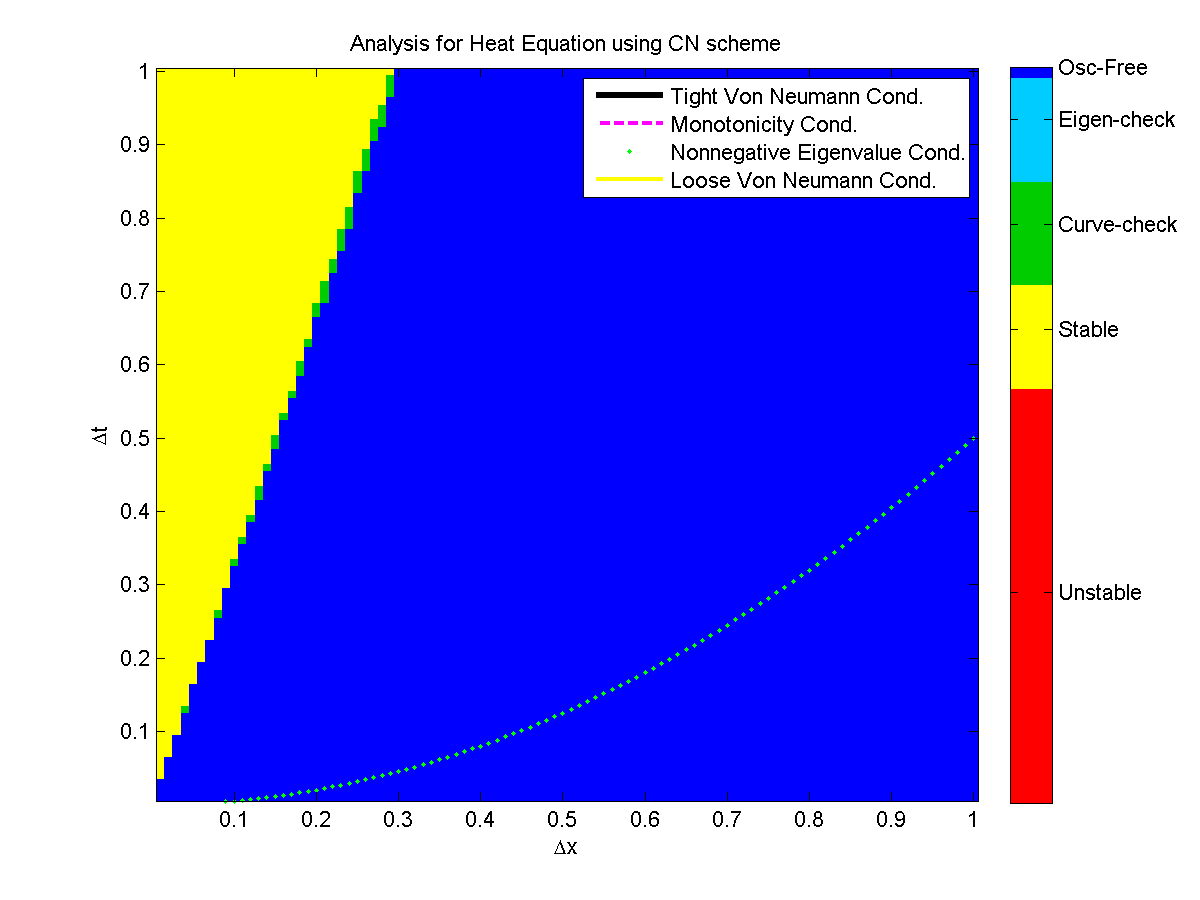}}
\caption{\protect\includegraphics[scale=0.5]{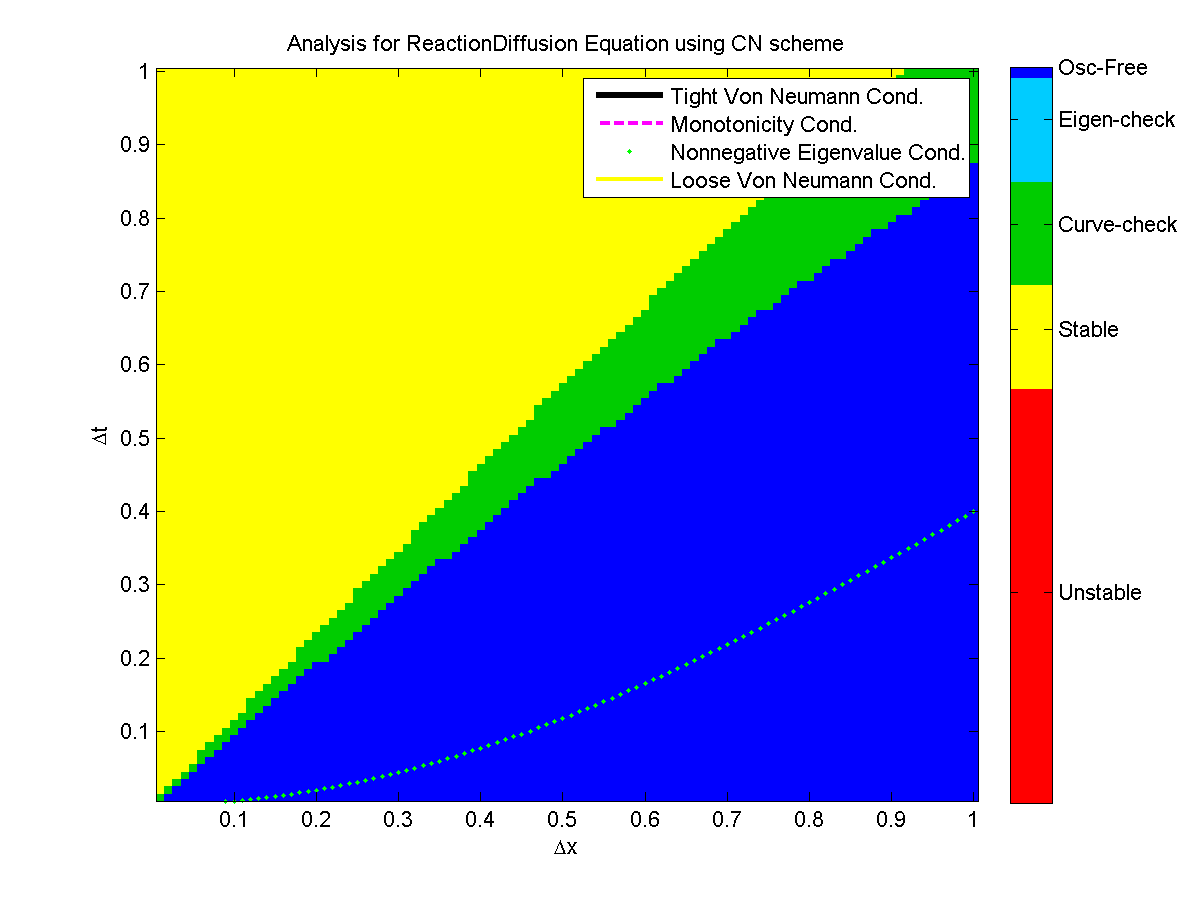}}

\caption{\protect\includegraphics[scale=0.5]{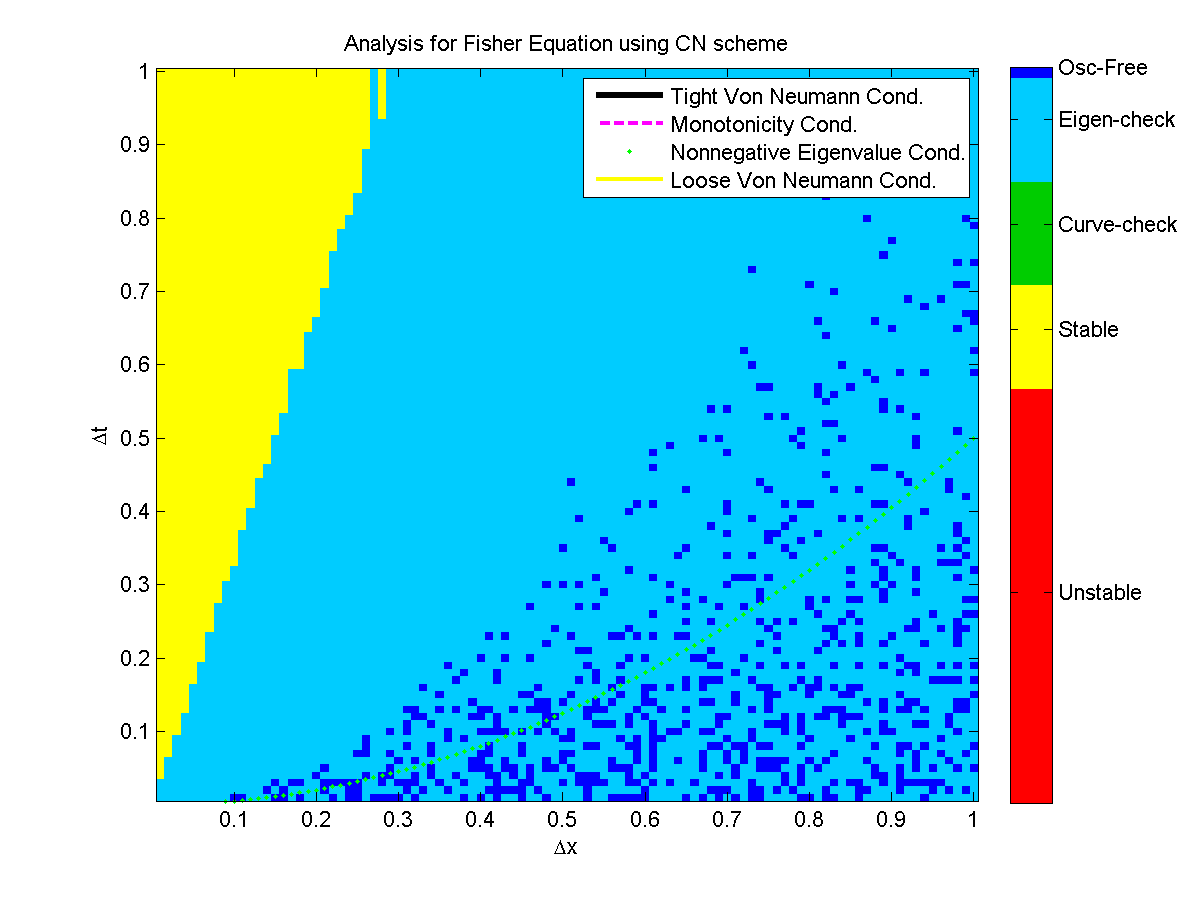}}
\end{figure}

\section{Discussion}

The diagnosis of numerical oscillations is important, since eliminating
them can provide accurate representations of these descriptive and
powerful equations, which are incredibly useful to us. The eigenvalues
of linear problems provide a concrete connection to the stability
and oscillations conditions of numerical schemes, and with a spectral
analysis, we can force a condition on the real parts of the eigenvalues
remaining positive. More usefully however is the connection between
eigenvalues and the Von Neumann error factor, especially in semi-linear
or non-linear PDEs. This error factor is a powerful tool in estimating
the eigenvalues of these nonlinear PDEs. The following conjectures,
backed up by numerical evidence, demonstrate that a consistent linearization
of a problem may allow us to form conditions on the linearized scheme
using the error factor. Thus, we would be able to contain these oscillations
for nonlinear parabolic PDEs as well.
\begin{conjecture}
Given a linear PDE with a symmetric positive difference scheme matrix,
if there exists a region of numerically stable oscillations, it is
bounded below by the positive eigenvalue condition
\end{conjecture}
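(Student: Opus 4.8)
The plan is to reduce the geometric statement---that the stable-oscillation region sits above the positive-eigenvalue curve---to a single implication: \emph{if every eigenvalue of $M$ satisfies the positive eigenvalue condition $Re(\lambda_i)>0$, then the scheme is oscillation-free}. The paper has already established that any region of stable oscillations is bounded above by the stability (Von Neumann) condition and below by the monotonicity condition, so it suffices to show that the positive-eigenvalue curve lies on the oscillation-free side of the monotonicity boundary; equivalently, that no stable oscillation can occur while all eigenvalues are positive. First I would invoke symmetry: since $M$ is symmetric, the spectral theorem gives real eigenvalues $\lambda_i$ and an orthonormal eigenbasis $\{v_i\}$, so the positive eigenvalue condition is simply $\lambda_i>0$ for all $i$, and every solution admits the modal expansion $U^n = \sum_i c_i \lambda_i^n v_i$ with $c_i = \langle U^0, v_i\rangle$.

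The core computation is a single-mode sign analysis tied to the growth-factor relation (\ref{Growth Factor-Eigenvalue relation}), which identifies each eigenvalue with the temporal growth factor $T^{n+1}/T^n=\lambda$ of its mode. For a pure mode $U_m^n = c\,\lambda^n (v)_m$ the consecutive temporal differences are $U_m^{n+1}-U_m^n = c\,\lambda^n(\lambda-1)(v)_m$, so the quantity tested by the numerical oscillation criterion (\ref{Numerical Oscillation Test}) factors as
\[
(U_m^{n+2}-U_m^{n+1})(U_m^{n+1}-U_m^n) = c^2\,\lambda^{2n+1}(\lambda-1)^2\,(v)_m^2,
\]
whose sign equals $\operatorname{sign}(\lambda)$. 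Hence a single eigen-mode passes the oscillation-free test (\ref{Monotonicity Definition}) at every $m,n$ exactly when $\lambda\ge 0$: the positive eigenvalue condition is, mode by mode, \emph{identical} to temporal monotonicity. This is the precise mechanism by which a negative eigenvalue injects a sign flip, and it is what makes the positive-eigenvalue curve the natural lower boundary.

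From here I would assemble the conjecture. In the parameter plane, decreasing the mesh ratio pushes the smallest eigenvalue upward; the positive eigenvalue condition fails precisely where $\lambda_{\min}$ crosses zero, and below that curve every $\lambda_i>0$, so by the modal argument every mode is temporally monotone and no sign flip is produced. Consequently the oscillation-free-stable region contains the entire positive-eigenvalue region, and any stable-oscillation region must therefore lie strictly above the positive-eigenvalue curve---that is, it is bounded below by it, as claimed. For the Heat FTCS matrix this is already visible: $\lambda_{\min}\approx 1-4r$, so $\lambda_{\min}>0$ gives $r<1/4$, the observed lower edge of the oscillatory band, while stability only requires $r\le 1/2$.

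\textbf{The main obstacle} is the passage from the single-mode statement to the full superposition. A sum of sign-preserving, magnitude-decreasing modes need not itself be monotone: two positive eigenvalues such as $0.9$ and $0.5$ can combine so that $2(0.9)^n-(0.5)^n$ has a difference that changes sign, which the test (\ref{Numerical Oscillation Test}) would flag even though every $\lambda_i>0$. I would address this the way the paper's analysis is framed---diagnosing oscillation mode by mode through the growth-factor relation (\ref{Growth Factor-Eigenvalue relation}), so that \emph{oscillation-free} is taken to mean that no mode carries a sign alternation---or, for a genuinely global claim, by restricting to initial data whose eigen-coordinates $c_i(v)_m$ share a sign (for instance nonnegative data aligned with a Perron-type dominant eigenvector), under which the sum of monotone terms is again monotone. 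Pinning down the hypothesis that closes this gap, rather than the single-mode algebra, is where the real work lies.
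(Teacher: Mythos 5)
Your attempt cannot be compared against a proof in the paper, because the paper does not prove this statement: it is offered purely as a conjecture, supported by numerical evidence (the spectral analysis of the Heat FTCS matrix, whose eigenvalues $\lambda_{i}=1-4r\sin^{2}\left(\frac{i\pi}{2(N+1)}\right)$ ``leak'' negative as $r$ grows, and the color-map simulations in Figure 4.1). Your reduction of the geometric claim to the implication ``all $\lambda_{i}>0$ implies no stable oscillation,'' and your single-mode computation showing that the tested product factors as $c^{2}\lambda^{2n+1}(\lambda-1)^{2}(v)_{m}^{2}$ with sign equal to $\operatorname{sign}(\lambda)$, are both correct, and they formalize exactly the mechanism the paper gestures at through the growth-factor relation (\ref{Growth Factor-Eigenvalue relation}). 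In that sense your proposal goes strictly beyond the paper rather than paralleling it.

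However, the obstacle you flag at the end is not a technicality to be smoothed over; it is a genuine gap that, as the claim is phrased, defeats the proof. If ``oscillation'' means failure of the numerical test (\ref{Numerical Oscillation Test}) applied to the full solution, then the implication you reduced to is simply false: your own two-mode example with $\lambda=0.9$ and $\lambda=0.5$ and coefficients $2$ and $-1$ produces temporal differences $0.3,\ 0.07,\ -0.037,\dots$, so the sign product goes negative at the turning point even though every eigenvalue is positive. So the conjecture is not provable from the single-mode algebra plus superposition; something must be added. The candidates you list are the right ones, but they are materially different theorems: (a) defining oscillation mode-by-mode makes the claim an immediate corollary of relation (\ref{Growth Factor-Eigenvalue relation}) and is close to circular; (b) restricting to initial data whose eigen-coordinates share a sign at each spatial index is a strong hypothesis the conjecture does not state; (c) the most faithful repair is an eventual-monotonicity argument --- a finite sum of geometric modes with positive ratios can reverse direction only finitely many times (at most $N-1$), is eventually dominated by the largest eigenvalue's mode, and hence cannot exhibit the persistent flip-flopping that a negative eigenvalue forces every other step --- but this requires distinguishing transient direction changes from persistent alternation, a distinction the paper's binary test cannot make. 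Until one of these is chosen and carried out, what you have is a correct and illuminating mode-level analysis, not a proof of the stated conjecture.
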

From preliminary investigation, a relation between the balance of
eigenvalues and the monotonicity condition seems evident, most particularly
from our data from the Heat FTCS analysis (see Figure 4.1). In analyzing
the eigenvalues and eigenvectors, we noticed that if there were paired
eigenvalues of similar magnitude by opposite sign, then the eigenvectors
were of equal magnitude but opposite sign. This led us to a believe
that the balance of these eigenvalues along with their eigenvectors
had a place in an oscillation free condition. The balanced eigenvalues
and eigenvectors seemed to ``block'' oscillations.
\begin{conjecture}
Given a linear PDE with a well posed scheme and consistent scheme
matrix, the solution will be oscillation-free iff there exists a dominating
positive eigenvalue.
\end{conjecture}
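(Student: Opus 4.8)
The plan is to exploit the eigendecomposition of the transformation matrix $M$ together with the growth-factor relation \eqref{Growth Factor-Eigenvalue relation}, which identifies each eigenvalue $\lambda_i$ with the temporal growth rate of a corresponding wave mode. Assuming $M$ is diagonalizable with eigenpairs $(\lambda_i, v_i)$ and expanding the initial data as $U^0 = \sum_i c_i v_i$, the solution at step $n$ is $U^n = \sum_i c_i \lambda_i^n v_i$, so each component satisfies $U_m^n = \sum_i c_i \lambda_i^n (v_i)_m$. By the temporal monotonicity criterion behind the numerical test \eqref{Numerical Oscillation Test}, oscillation-free behavior is equivalent to the consecutive differences $U_m^{n+1} - U_m^n = \sum_i c_i \lambda_i^n (\lambda_i - 1)(v_i)_m$ not changing sign in $n$. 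I read ``a dominating positive eigenvalue'' as the existence of a real, simple $\lambda_1 > 0$ with $\lambda_1 > |\lambda_i|$ for every $i \neq 1$ --- precisely the condition under which $M$ behaves, asymptotically in $n$, like the rank-one projection onto its leading mode.

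For the sufficiency direction I would factor out the dominant power, writing the difference as $\lambda_1^n\bigl[c_1(\lambda_1 - 1)(v_1)_m + \sum_{i \neq 1} c_i (\lambda_i/\lambda_1)^n (\lambda_i - 1)(v_i)_m\bigr]$. Since $|\lambda_i/\lambda_1| < 1$, the bracketed sum converges to $c_1(\lambda_1 - 1)(v_1)_m$, and because $\lambda_1^n > 0$ for all $n$ the sign of the difference is eventually fixed; hence each component sequence is eventually monotonic and the test is passed. For the necessity direction I would argue the contrapositive: absent a dominating positive eigenvalue, the eigenvalue attaining the spectral radius is either (i) real and negative, so $\lambda^n$ alternates sign and flips the leading term; (ii) part of a complex-conjugate pair $re^{\pm i\theta}$, whose real contribution $\propto r^n\cos(n\theta+\phi)$ oscillates; or (iii) tied in magnitude with an opposite-sign real eigenvalue, producing a persistent beat between $\pm|\lambda|^n$. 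In each case the leading-order difference changes sign infinitely often, violating \eqref{Numerical Oscillation Test}.

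The hard part will be reconciling the asymptotic sign analysis with the strict ``for all $n$'' reading of monotonicity: the eigenvalue argument yields only \emph{eventual} monotonicity, so the statement must be interpreted as ruling out persistent sign-flipping rather than a single transient flip, which I would formalize by defining an oscillation as a sign change recurring for arbitrarily large $n$. A second delicate point is the dependence on the initial data: if $c_1 = 0$, the initial condition is deficient in the leading eigenvector and a lower mode takes over, so the dichotomy can fail; I expect to need a genericity hypothesis on $U^0$, or a restriction to nonnegative physical data (as in the symmetric positive setting of Conjecture 1), where positivity of the Perron eigenvector forces $c_1 \neq 0$. If $M$ is not diagonalizable, Jordan blocks contribute polynomial-in-$n$ prefactors, but these are subdominant to $\lambda_1^n$ and leave the leading sign unchanged; the bookkeeping should simply be carried out to confirm it. Consistency and well-posedness enter only to guarantee stability, bounding the spectrum by $1 + C\Delta t$ so that the expansions above are meaningful.
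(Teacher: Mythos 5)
The first thing to be clear about: the paper does not prove this statement. It is stated as Conjecture 2, and the authors' entire support for it is empirical --- the correlation they observed between their numerical monotonicity test and their balanced-eigenvalue test (tracked as $\max(Re(\lambda_{i}))-\max(|Re(\lambda_{i})|)\geq0$) in the Heat FTCS simulations. So there is no paper proof for your proposal to match; you are attempting something the authors deliberately left open. Note also that your reading of ``dominating positive eigenvalue'' (a simple real $\lambda_{1}>0$ with $\lambda_{1}>|\lambda_{i}|$ for all $i\neq1$) is strictly stronger than theirs, which compares only real parts and permits equality; this difference matters below.

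As a proof plan, your eigendecomposition is the natural attack, and you correctly flagged two of its weak points (eventual versus strict monotonicity, and the degenerate case $c_{1}=0$). But there is a third, unaddressed gap that breaks sufficiency: your factorization $U_{m}^{n+1}-U_{m}^{n}=\lambda_{1}^{n}\bigl[c_{1}(\lambda_{1}-1)(v_{1})_{m}+\sum_{i\neq1}c_{i}(\lambda_{i}/\lambda_{1})^{n}(\lambda_{i}-1)(v_{i})_{m}\bigr]$ concludes the sign is eventually that of $c_{1}(\lambda_{1}-1)(v_{1})_{m}$, which silently assumes this coefficient is nonzero. The case $\lambda_{1}=1$ is not exotic --- the paper itself observes that for Neumann problems the largest eigenvalue is always exactly $1$. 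Then the dominant mode contributes nothing to the differences, whose sign is controlled by subdominant modes; if any of those is negative (as happens for Crank--Nicolson at large $r$), the differences alternate for all large $n$ and the test \eqref{Numerical Oscillation Test} flags oscillation despite the dominating positive eigenvalue. The same failure occurs at any component where $(v_{1})_{m}=0$. On the necessity side, your case (iii) (a negative eigenvalue tied in magnitude) produces sign flips only if the alternating term actually overtakes the fixed term at some component and step; if $|c_{1}(\lambda_{1}-1)(v_{1})_{m}|>|c_{2}(\lambda_{1}+1)(v_{2})_{m}|$ uniformly, no flip ever occurs, so the genericity hypothesis you defer is doing essential mathematical work, not bookkeeping --- the oscillation behavior genuinely depends on the initial data, which is why an unconditional ``iff'' on the spectrum alone cannot hold. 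Finally, the paper's own data cuts against sufficiency under your magnitude reading: for Heat Crank--Nicolson with Dirichlet conditions the spectrum lies in $\left[\frac{1-2r}{1+2r},1\right)$, so the largest eigenvalue exceeds every other in magnitude, yet the authors report a region of stable oscillations behind a linear front that ``none of the analysis predicted.'' Any correct proof must either adopt a different notion of dominance or explain that observation away.
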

The numerical evidence for this is very convincing, since the monotonicity
condition relates directly to this balanced eigenvalue condition.
We have seen this in our numerical results, which is summarized in
the Results section along with the visual representation of our data.
\begin{conjecture}
Given a nonlinear partial differential equation, the stability and
oscillation-free conditions for a consistent linearized form of a
nonlinear numerical method is sufficient to ensure oscillation-free
stability.
\end{conjecture}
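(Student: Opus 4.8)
The plan is to view the nonlinear two-level method as $U^{n+1}=F(U^{n})$ and to split it into its consistent linearization plus a truncation remainder. Writing the linearized (or frozen) transformation matrix as a state-dependent $M_{n}=M(U^{n})$, the linear-approximation construction (\ref{Linear Approximation}) gives
\[
U^{n+1}=M_{n}U^{n}+R_{n},\qquad \|R_{n}\|\le L\,\dt^{2}\,\|U^{n}\|,
\]
where the bound on $R_{n}$ is exactly the statement that the linearization is \emph{consistent}: the neglected terms in the Taylor expansion of $f$ are one order higher in $\dt$ than the retained linear part. First I would fix the smoothness hypotheses on $f$ and $f_{u}$ needed to make this remainder bound hold uniformly on the bounded region swept out by the solution.

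Second, I would transfer stability. Assuming the linearized hypothesis — the Von Neumann criterion (\ref{VN criterion}), $\rho(M_{n})\le 1+C\dt$, holding uniformly, with $M_{n}$ symmetric so that $\|M_{n}\|_{2}=\rho(M_{n})$ — the decomposition above yields $\|U^{n+1}\|\le(1+C\dt)\|U^{n}\|+\|R_{n}\|$. A discrete Gr\"{o}nwall argument then produces $\|U^{n}\|\le Ke^{\beta t}\|U^{0}\|$, which is precisely Definition \ref{Burden Faires Stability}; this step is essentially the Lax Equivalence Theorem (\ref{thm:Lax-Equivalence-Theorem:}) and the Matrix Convergence Theorem (\ref{Matrix Convergence Theorem}) applied to the frozen matrix, with the remainder absorbed inductively so long as $\|U^{n}\|$ stays bounded.

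Third, and the real content, I would transfer the oscillation-free property. By the growth-factor relation (\ref{Growth Factor-Eigenvalue relation}), a single frozen step advances each separated mode by its eigenvalue $\lambda_{i}(U^{n})$; if the dominating-positive-eigenvalue condition of the second conjecture holds for $M_{n}$, then the linear part $M_{n}U^{n}$ produces a sign-preserving (monotone) increment in the sense of (\ref{Monotonicity Definition}), so the purely linear update passes the sign test (\ref{Numerical Oscillation Test}). To carry this to the nonlinear update $U^{n+1}=M_{n}U^{n}+R_{n}$ I would show that the monotone increment dominates the remainder: since the increment from $M_{n}U^{n}-U^{n}$ scales like $\dt$ while $\|R_{n}\|=O(\dt^{2})$, a step-size restriction guarantees $R_{n}$ cannot flip the sign of the temporal difference, so monotonicity — and hence oscillation-freeness in the working sense — is inherited by the nonlinear solution.

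The hard part will be the state dependence of $M_{n}$. In the linear theory the eigenvalue condition is checked once; here the positive-dominant condition verified at $U^{n}$ need not survive as $U^{n}$ evolves under $F$. The crux is therefore an invariant-region argument: I must show that the set $\mathcal{S}$ of states on which the oscillation-free eigenvalue condition holds is carried into itself by $F$, using continuity of the frozen eigenvalues in the coefficient (generalizing the explicit formula $\lambda_{i}=1-4r\sin^{2}(i\pi/2(N+1))$ to include the reaction coefficient) together with the monotone decay established above to confine the solution. Two caveats bound what can be proved: the target characterization is the second conjecture, itself unproven, so the result is conditional; and because oscillation-freeness is captured only through the sufficient monotonicity surrogate, the theorem will read as \emph{sufficiency}, not equivalence — exactly as the statement claims.
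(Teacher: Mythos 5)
The first thing to be clear about: the paper does not prove this statement. It appears as one of four conjectures, supported only by the numerical experiments summarized in the Results section, and the Discussion explicitly describes these conjectures as ``backed up by numerical evidence.'' So there is no paper proof to compare yours against; your proposal has to stand or fall on its own, and as written it has genuine gaps.

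The step that carries oscillation-freeness from the frozen linear update to the nonlinear one is circular in the context of this paper: you invoke the dominating-positive-eigenvalue condition to conclude that $M_{n}U^{n}$ produces a monotone, sign-preserving increment, but that implication (positive dominant eigenvalue $\Rightarrow$ monotone, hence oscillation-free) is precisely what the paper's earlier conjectures assert and never prove, so your argument is a reduction of one conjecture to another rather than a proof --- a caveat you acknowledge but which cannot be waved away. More fatally, the domination argument fails quantitatively. You argue that the monotone increment $M_{n}U^{n}-U^{n}$ scales like $\dt$ while $\|R_{n}\|=O(\dt^{2})$, so a step-size restriction prevents $R_{n}$ from flipping the sign of the temporal difference. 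But the parabolic problems at issue (Nonlinear Reaction-Diffusion, Fisher-KPP) decay toward a steady state, so the componentwise differences $U_{m}^{n+1}-U_{m}^{n}$ tend to zero as $n$ grows, while your remainder bound $L\dt^{2}\|U^{n}\|$ does not shrink at the same rate; for any fixed $\dt$ there is eventually a time step at which the remainder is as large as the increment, and the sign test (\ref{Numerical Oscillation Test}) can then fail. Closing this requires a relative bound of the form $\|R_{n}\|\leq C\dt\,\|U^{n+1}-U^{n}\|$, i.e.\ a remainder controlled by the increment itself, which a bare Taylor-remainder estimate does not supply. Finally, the invariant-region argument that you yourself identify as the crux --- that the set of states on which the eigenvalue condition holds is mapped into itself by the nonlinear update, with eigenvalues depending continuously on the frozen coefficient --- is only named, never carried out, and for the state-dependent $M(U^{n})$ of the freezing technique this is exactly where the difficulty lives.
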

Tying this to the monotonicity condition leads us to the final conjecture.
We believe that if a non-linear PDE is monotonic, then it is also
oscillation-free. Since the monotonicity condition is a necessary
condition on the eigenvalues of a method as well as the fact that
it is sufficient for stability, then we believe that the solution
will be oscillation-free and stable.
\begin{conjecture}
Given a nonlinear partial differential equation with monotonic initial
and asymptotic behavior, a numerical method is oscillation-free if
it is monotonic. Further, this monotonicity condition matches the
oscillation-free condition of the linearized form of the numerical
method.
\end{conjecture}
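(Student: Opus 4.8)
The plan is to separate the statement into its two assertions and treat them in sequence, using the growth-factor/eigenvalue relation (\ref{Growth Factor-Eigenvalue relation}) as the bridge between the nonlinear scheme and its linearization. The first assertion --- that a monotonic method is oscillation-free --- is essentially a consequence of the definitions. If the temporal sequence $\{U_m^n\}_n$ is monotone at each fixed node $m$, then the forward differences $U_m^{n+1}-U_m^n$ never change sign as $n$ increases, so $\mathrm{sign}(U_m^{n+2}-U_m^{n+1})\,\mathrm{sign}(U_m^{n+1}-U_m^n)\geq 0$ holds for every $n$, which is precisely the numerical oscillation test (\ref{Numerical Oscillation Test}). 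Because the underlying PDE is assumed to have monotone initial and asymptotic behavior, the exact solution decays monotonically to its steady state, so the task reduces to exhibiting a step-size condition under which the numerical method inherits this monotonicity.

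Next I would pass to the linearized scheme to address the second assertion, that the monotonicity condition matches the oscillation-free condition of the linearized form. Using the consistent linear approximation (\ref{Linear Approximation}) --- or, equivalently, the nonlinear-freezing construction --- the nonlinear update is replaced by a two-level linear scheme $U^{n+1}=MU^n+\Delta t\,G^n$ whose transformation matrix $M$ is tridiagonal under Dirichlet conditions, with the eigenvalues and sine-mode eigenvectors recorded above. On this linear scheme the separable ansatz $U_m^n=X_m T^n$ is valid, so (\ref{Growth Factor-Eigenvalue relation}) gives $T^{n+1}=\lambda T^n$ for each mode; a single mode is oscillation-free exactly when its amplitude never changes sign, i.e. when $\lambda>0$, and requiring this of every mode is the positive-eigenvalue condition $\mathrm{Re}(\lambda_i)>0$. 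Writing this out with the explicit eigenvalue formula produces an inequality in $r=\Delta t/\Delta x^{2}$ (together with the reaction parameters), and I would then show that this inequality coincides with the step-size threshold at which the nonlinear solution first loses monotonicity, thereby establishing that the two conditions match.

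The hard part will be twofold. First, the linearization carries a consistency error that must be controlled so that the frozen or approximated nonlinear term does not itself inject a sign change: concretely, one needs the effective growth factor of the genuine nonlinear update to remain within $O(\Delta t)$ of the eigenvalue $\lambda$ of $M$ on the modes that carry the solution, so that positivity of $\lambda$ forces positivity of the nonlinear ratio $U_m^{n+1}/U_m^n$. Second, the eigenvalue condition is modal while monotonicity is pointwise, and the sine eigenvectors $u_i^{(k)}=\sin(ik\pi/(N+1))$ change sign in space for $k\geq 2$; I would therefore invoke the positive-dominance and balance picture described above --- the smooth first mode is sign-definite and dominant, while sign-paired higher modes cancel --- to transfer modal positivity into pointwise temporal monotonicity. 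Establishing this transfer rigorously, rather than as the numerically observed correspondence reported in the Results, is the principal gap, and it is the reason the statement is offered as a conjecture rather than proved outright.
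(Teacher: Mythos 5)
First, a point of comparison: the paper does not prove this statement at all. It is the final conjecture of the Discussion section, supported only by the numerical experiments summarized in the Results (the color-coded stability/oscillation maps) and by a heuristic appeal to the earlier monotonicity and eigenvalue discussion. So there is no paper proof for your proposal to match; anything you establish would be new mathematics, and your own closing admission --- that the modal-to-pointwise transfer is ``the principal gap'' --- means your proposal is a research plan, not a proof. Your first assertion is indeed essentially definitional and is already observed in the paper: with oscillation defined numerically by the sign test (\ref{Numerical Oscillation Test}), temporal monotonicity trivially passes it, so that half costs nothing and proves nothing beyond what the paper states when it calls monotonicity ``a sufficient condition for oscillation-free stability.''

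The genuine gaps are in the second assertion, and they are worse than you indicate. (a) Your linearization-error argument --- that the nonlinear growth ratio stays within $O(\dt)$ of the eigenvalue $\lambda$ of $M$, so positivity of $\lambda$ forces positivity of the ratio --- fails exactly where the claim lives. ``Matching'' of the two conditions is a statement about the threshold curve in the $(\dt,\dx)$ plane where the smallest eigenvalue crosses zero (e.g.\ $\lambda_{\min}=1-4r\sin^{2}\left(\frac{N\pi}{2(N+1)}\right)\approx 1-4r$ for the explicit scheme). On and near that curve $\lambda$ is not bounded away from zero, so an $O(\dt)$ perturbation can flip its sign, and no conclusion about the nonlinear ratio follows; away from that curve the argument proves nothing about matching. (b) The modal-to-pointwise transfer you defer to the ``positive-dominance and balance picture'' is not available as a tool: that picture is itself Conjecture 2 of the paper, so invoking it amounts to proving one conjecture by assuming another. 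Since the sine eigenvectors $u_i^{(k)}$ change sign in space for $k\geq 2$, positivity of every modal ratio genuinely does not imply pointwise temporal monotonicity without controlling the expansion coefficients of the initial data, which you never do. (c) ``Matches'' is a two-sided claim: you must also show that when the linearized oscillation-free condition fails, the nonlinear method actually loses monotonicity. Your sketch only addresses one direction, and the paper's own Fisher-KPP experience --- the solution fails the monotonicity check during its diffusive stage even in oscillation-free parameter regimes --- shows the two thresholds can genuinely decouple for nonlinear problems, so the converse direction cannot be waved through.
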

Containing oscillations in Initial Value Boundary Problems, which
contain no physical oscillations, remained relatively easy, but exploration
of those containing physical waves and oscillations, accurately damping
numerical oscillation requires more refined investigation.

\end{document}